\newtheorem{cor}[subsection]{Corollary}
\newtheorem{thm}[subsection]{Theorem}
\newtheorem{def/lemma}[subsection]{Definition/Lemma}
\newtheorem{conjecture}[subsection]{Conjecture}
\newtheorem{prop}[subsection]{Proposition}
\theoremstyle{definition}
\newtheorem{remark}[subsection]{Remark}
\newtheorem*{remark*}{Remark}
\newtheorem*{theorem*}{Theorem}
\newtheorem*{defn*}{Definition}
\newtheorem*{lemma*}{Lemma}
\newtheorem*{corollary*}{Corollary}
\newtheorem{question*}{Question}
\newtheorem*{conjecture*}{Conjecture}
 \newtheorem*{prop*}{Proposition}
 \newtheorem*{example*}{Example}
\newcommand{\cB}{\mathcal{B}}
\newcommand{\cC}{\mathcal{C}}
\newcommand{\cM}{\mathcal{M}}
\newcommand{\cO}{\mathcal{O}}
\newcommand{\cS}{\mathcal{S}}
\newcommand{\cZ}{\mathcal{Z}}
\newcommand{\bA}{\mathbb{A}}
\newcommand{\bC}{\mathbb{C}}
\newcommand{\bF}{\mathbb{F}}
\newcommand{\bG}{\mathbb{G}}
\newcommand{\bP}{\mathbb{P}}
\newcommand{\bQ}{\mathbb{Q}}
\newcommand{\bR}{\mathbb{R}}
\newcommand{\bZ}{\mathbb{Z}}
\newcommand{\ft}{\mathfrak{t}}
\newcommand{\fC}{\mathfrak{C}}
\newcommand{\Spec}{\mathrm{Spec}}
\numberwithin{equation}{subsection}
\newcommand{\reg}{\mathrm{reg}}
\newcommand{\Ad}{\mathrm{Ad}}
\newcommand{\fg}{\mathfrak{g}}
\newcommand{\fb}{\mathfrak{b}}
\newcommand{\fn}{\mathfrak{n}}
\newcommand{\fl}{\mathfrak{l}}
\newcommand{\fsl}{\mathfrak{sl}}
\newcommand{\der}{\mathrm{der}}
\newcommand{\ad}{\mathrm{ad}}
\newcommand{\lng}{\langle}
\newcommand{\rng}{\rangle}
\newcommand{\cpt}{\mathsf{cpt}}
\newcommand{\dagg}{\dagger}
\newcommand{\sfE}{\mathsf{E}}
\newcommand{\rank}{\textup{rank }}
\newcommand{\po}{\ar@{}[dr]|{\text{\pigpenfont R}}}
\newcommand{\pb}{\ar@{}[dr]|{\text{\pigpenfont J}}}
\begin{document}

\title{Cohomology of the universal centralizers I: the adjoint group case}
\author{Xin Jin}
\address{Math Department, Boston College, Chestnut Hill, MA 02467, United States of America.}
\email{xin.jin@bc.edu}

\begin{abstract}
We compute the rational cohomology of the universal centralizer $J_G$ (also known as the Toda system or BFM space) for a complex (connected) semisimple group $G$ of adjoint form. While $J_G$ exhibits interesting and increasingly complex topology as the rank of $G$ rises, its rational cohomology is surprisingly simple--it coincides with that of a point. In a subsequent work \cite{Jin2}, we will extend this analysis to the case of $J_G$ for general semisimple $G$. In particular, we will show that its rational cohomology has pure Hodge structure.
\end{abstract}

\maketitle

\tableofcontents

\section{Introduction}

The  universal centralizer  $J_G$ (cf. \cite{Lusztig, Kostant, BFM, Teleman, Ginzburg}\footnote{In \cite{Lusztig}, the group-group version of universal centralizer was first introduced, which is different from the Lie algebra-group version $J_G$ considered in the current paper.}) is a smooth affine completely integrable system associated to a (connected) complex semisimple (or reductive) algebraic group $G$. In particular, it has a natural holomorphic symplectic structure. It has appeared in various contexts of geometric representation theory, differential geometry and mathematical physics. 
For example, it is identified with the moduli space of solutions to the Nahm equations, as in work of Atiyah-Hitchin \cite{AH}, Donaldson \cite{Donaldson}, Bielawski \cite{Bielawski}, etc.. Moreover, it is the Coulomb branch with matter $0$, mathematically defined by Braverman-Finkelberg-Nakajima \cite{BFN}. 
It was used in Ngo's proof of the Fundamental Lemma \cite{Ngo}, and it has recently received much more attention due to the study of bi-Whittaker $D$-modules, which has important applications in geometric representation theory and geometric Langlands program (cf. \cite{BZG, Lonergan, Ginzburg, Gannon}). There is also a Betti version of the result, where the category of  bi-Whittaker $D$-modules is replaced by the wrapped Fukaya category of $J_G$ (equivalently, microlocal sheaf category). This is stated as a conjecture in Ben-Zvi--Gunningham \cite[Remark 2.7]{BZG} and is presented by the author  in \cite{Jin} as a homological mirror symmetry result.

There are some natural analogies between $J_G$ (more generally, symplectically resolved Coulomb branches) and (smooth) Hitchin integrable systems (cf. \cite{BFN, Maxence}). On the other hand, $J_G$ is an open subset of the affine Toda system $\cM_G$ (cf. \cite{Etingof} and references therein for the definition of the affine Toda system), which is naturally identified with a smooth moduli space of Higgs bundles on $\bP^1$ with certain automorphic data (in particular irregular singularities of the Higgs fields) at $0, \infty$ (cf. the upcoming work \cite{JY}). It is then natural to expect that $J_G$ has a pure Hodge structure, as all (smooth) Hitchin systems do.  

On the other hand, there are essential differences between $J_G$ and usual (smooth) Hitchin integrable systems. First, the completely integrable system associated with $J_G$ is non-proper, and has a generic fiber isomorphic to a maximal torus in $G$. Second, the partial symplectic compactification \( \mathcal{M}_G \) (more precisely, its neutral component), viewed as a wild Higgs moduli space with a proper Hitchin map, does not possess a \( \mathbb{C}^\times \)-action that contracts everything to a central Hitchin fiber. Both properness (over the base) and the $\bC^\times$-action are essential for the argument that the usual (smooth) Higgs moduli spaces exhibit a pure Hodge structure (cf. \cite[\S 5]{Felisetti}). In a different direction, there is a partial log-compactification of $J_G$ (only for adjoint groups) given by Balibanu \cite{Balibanu} that possesses both properness and \( \mathbb{C}^\times \)-action, but it is not symplectic, in particular not a Hitchin system. 

The purpose of this note and its sequel \cite{Jin2} is to show that $J_G$ has pure Hodge structure, and give an explicit answer for its rational cohomology. 

 Let $n=\rank G$ and let $\Pi$ be a complete set of simple roots. In \cite{Teleman, Jin},  a Bruhat stratification of $J_G$ is given (see Proposition \ref{prop: Bruhat} (i)), in which all strata are locally closed subvarieties that have connected components isomorphic to a product of $\bA^n$ and $(\bG_m)^{k}$. Hence one can explicitly calculate the $\bF_q$-points of $J_G$ (for $q$ sufficiently large) and the $\sfE$-polynomial:
\begin{align*}
&|J_G(\bF_q)|=q^{n}\sum_{S\subset \Pi} |\pi_0(Z(L_S))|(q-1)^{n-|S|}.
\end{align*}
and $\sfE_{J_G}(u,v)$ is given by plugging into $q$ by $uv$.
 If $J_G$ has pure Hodge structure, then the Poincar\'e polynomial can be obtained by $t^{4n}\sfE_{J_G}(-t^{-1},-t^{-1})$ (cf. \cite[\S 4.2]{Hausel}). 
 In the case when $G$ is of adjoint form, $|\pi_0(Z(L_S))|=1$ for all $S$, hence  $|J_G(\bF_q)|=q^{2n}$.
 This motivates our main result. 

\begin{thm}\label{prop: RHJ_G_ad}
Assume $G$ is of adjoint form. Then the rational cohomology of $J_{G}$ is trivial.  
\end{thm}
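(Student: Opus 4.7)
The plan is to reduce the theorem to a purity statement. First, the Bruhat stratification of Proposition~\ref{prop: Bruhat}, combined with the point count recorded in the introduction, gives the E-polynomial
\begin{equation*}
\sfE_{J_G}(u,v) \;=\; (uv)^n \sum_{S \subset \Pi}(uv-1)^{n-|S|} \;=\; (uv)^{2n}
\end{equation*}
in the adjoint case (where $|\pi_0(Z(L_S))|=1$ for every $S$, so each stratum $U_S \cong \mathbb{A}^n \times (\mathbb{G}_m)^{n-|S|}$ is a single connected component). If one knows moreover that $J_G$ has pure Hodge structure, i.e.\ that $H^k_c(J_G;\mathbb{Q})$ is pure of weight $k$ for every $k$, then the single-monomial E-polynomial forces $H^{4n}_c(J_G) = \mathbb{Q}(-2n)$ and $H^k_c(J_G) = 0$ for $k \neq 4n$.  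Since $J_G$ is smooth of complex dimension $2n$, Poincar\'e duality then yields $H^0(J_G;\mathbb{Q}) = \mathbb{Q}$ and $H^k(J_G;\mathbb{Q})=0$ for $k>0$, which is the theorem. Thus the entire proof reduces to establishing purity.

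As a preliminary simplification, each Bruhat stratum $\mathbb{A}^n \times (\mathbb{G}_m)^{n-|S|}$ has Hodge--Tate compactly supported cohomology, so the long exact sequences for the stratification show by induction that $H^*_c(J_G;\mathbb{Q})$ is Hodge--Tate; the missing ingredient is really only the weight statement, not the Hodge-type statement.  So the task is to show that every weight-shift that a priori could arise from the $(\mathbb{G}_m)^{n-|S|}$ factors is killed by a differential.

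For purity itself, I would attempt two complementary routes, both exploiting Balibanu's partial log-compactification $\bar J_G$~\cite{Balibanu}, which is available precisely in the adjoint case and carries both properness and a $\mathbb{C}^\times$-action. The first is to analyze the Deligne weight spectral sequence for the pair $(\bar J_G, D)$ with $D = \bar J_G \setminus J_G$, matching the closed strata of $D$ with the non-open Bruhat strata of $J_G$ and using the Bruhat-closure combinatorics to show that the differentials kill the associated graded in weights $>k$ in cohomological degree $k$.  The second is to use the $\mathbb{C}^\times$-action to produce a Bialynicki--Birula-type decomposition of $\bar J_G$ restricting to a cell-like decomposition of $J_G$, from which $H^*(J_G;\mathbb{Q})$ could be read off directly.

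The main obstacle is the purity step.  The individual Bruhat strata are not pure (the $(\mathbb{G}_m)^{n-|S|}$ factors carry weights strictly less than cohomological degree in $H^*_c$), so purity of $J_G$ requires genuine cancellation across strata.  Producing these cancellations---through the combinatorics of the Bruhat order on $2^{\Pi}$, or equivalently through the intersection pattern of the boundary divisor $D \subset \bar J_G$---is what makes the adjoint hypothesis essential: $|\pi_0(Z(L_S))|=1$ ensures a single connected stratum for each $S$, and it is this that permits total cancellation.  The general semisimple case, to be treated in \cite{Jin2}, presumably requires a weighted refinement of the same mechanism.
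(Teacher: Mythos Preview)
Your reduction to purity is correct --- the introduction records exactly this implication --- but it leaves the theorem unproved: you identify purity as ``the main obstacle'' and then offer only speculative routes toward it. Neither route is carried out. For the weight spectral sequence of $(\bar J_G, D)$, the cross-stratum cancellations you anticipate encode precisely the content of the theorem, and you supply no mechanism to produce them. For the Bialynicki--Birula route, the affine paving of $\bar J_G$ from \cite{Balibanu} concerns the (partial, non-compact) compactification rather than $J_G$ itself, and passing from one to the other is a further step you have not taken. So as written this is a proof strategy with its central lemma missing, not a proof.

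The paper avoids Hodge theory entirely and argues by direct topological induction on $\rank G$. Using Proposition~\ref{prop: Bruhat}, the quotient $\cC_G = (J_G - \Sigma_I)/\bR_+$ maps to the simplex $\fC^{\Pi_\dagg}$, and the inductive hypothesis applied to each $J_{L_{S;\ad}}$ collapses the pushforward so that $H_*(\cC_G;\bQ)$ is the colimit over $\{S \subsetneq \Pi\}$ of $S \mapsto H_*(Z(L_S);\bQ)$, which is then identified with $H_*(\bA^\Pi - \{0\};\bQ) \cong H_*(S^{2n-1};\bQ)$. Finally $J_G$ is recovered from $J_G - \Sigma_I$ by a single $2n$-dimensional handle attachment, and an explicit intersection count --- the central fiber meets a generic cotangent fiber transversely in $|W|$ points --- shows the attaching sphere generates $H_{2n-1}$, giving $H_*(J_G;\bQ) \cong \bQ$. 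Purity is then a \emph{corollary} of this computation (Corollary~\ref{cor: pi0ZLStrivial,H_*}), not an input to it.
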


We remark that the topology of $J_G$ becomes increasingly complex as $\rank G$ grows. In particular, the integral cohomology has complicated torsions.  
Our strategy for the proof of the theorem is by induction on the rank of $G$, and the key parabolic induction pattern of the geometry of $J_G$ reviewed in \S\ref{subsubsec: Induction pattern}--\S\ref{subsubsec: handle}
. This strategy also extends to semisimple groups $G$ with a nontrivial center, where the rational cohomology is nontrivial, but substantial more work is needed to take care of the non-triviality of $\pi_0(Z(L_S)), S\subset\Pi$, and its impact on inductions. This will be included in a forthcoming paper \cite{Jin2}.

We make the following conjecture that is natural from the analogies between Coulomb branches and Hitchin systems. 
\begin{conjecture}
The rational cohomology of every (resolved) Coulomb branch associated with connected semisimple groups, in the sense of Braverman-Finkelberg-Nakajima, has pure Hodge structure. 
\end{conjecture}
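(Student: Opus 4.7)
The plan has three steps. \textbf{Step 1 (matter-free case).} First I would complete the matter-free case, namely that $J_G$ has pure Hodge structure for arbitrary connected semisimple $G$; this is already the subject of \cite{Jin2}, and reduces to the adjoint case of Theorem \ref{prop: RHJ_G_ad} together with a careful accounting of the non-trivial $\pi_0(Z(L_S))$. The Bruhat stratification of Proposition \ref{prop: Bruhat} exhibits each stratum as a disjoint union of products of $\bA^n$ and $(\bG_m)^k$, which are Hodge--Tate pure; one then shows the weight spectral sequence degenerates, so $\sfE_{J_G}(u,v)$ and the Poincar\'e polynomial are related by $P_{J_G}(t)=t^{4n}\sfE_{J_G}(-t^{-1},-t^{-1})$. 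This is the Langlands-dual matter-free Coulomb branch, giving the conjecture in the simplest case.

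\textbf{Step 2 (general matter $\mathbf{N}$).} The BFN Coulomb branch $\mathcal{M}_C(G,\mathbf{N})$ carries a loop-rotation $\bC^\times$-action and a flavor torus $T_F$-action. The approach is to produce an analogue of the Bruhat stratification of $\mathcal{M}_C(G,\mathbf{N})$ indexed by orbits of a generic cocharacter of $\bC^\times\times T_F$, with each stratum an iterated fibration of affine bundles over a smaller (lower-rank, or reduced matter) Coulomb branch. Alternatively, one can use a Bia\l ynicki--Birula / attractor decomposition with respect to such a cocharacter, reducing purity of $\mathcal{M}_C(G,\mathbf{N})$ to purity of its fixed-point loci. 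By $\bC^\times$-equivariant Morse theory on the BFN variety, these fixed loci should be Coulomb branches attached to Levi subgroups with reduced matter content, giving an induction on $\rank G$ modeled on the parabolic induction pattern of \S\ref{subsubsec: Induction pattern}--\S\ref{subsubsec: handle}. The base cases are the matter-free Coulomb branches handled in Step 1 and the easy abelian cases (which give hypertoric varieties).

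\textbf{Step 3 (resolutions).} When a (partial) symplectic resolution $\widetilde{\mathcal{M}}\to\mathcal{M}_C(G,\mathbf{N})$ exists, for instance through flavor deformation in the sense of \cite{BFN}, or through an identification of $\widetilde{\mathcal{M}}$ with the Higgs branch of a 3d mirror theory (often a Nakajima quiver variety), the resolution is $\bC^\times$-equivariant with respect to the loop rotation. Purity of $H^*(\widetilde{\mathcal{M}};\bQ)$ then follows from purity of $\mathcal{M}_C(G,\mathbf{N})$ together with the decomposition theorem, using that the fibers in BFN-type resolutions are paved by affine cells. In the quiver-variety case, purity is in fact already known by Nakajima's work, providing both a sanity check and a source of base cases.

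The main obstacle is Step 2. For $J_G$ the Bruhat stratification is controlled by the genuine Bruhat decomposition of $G$, which is what reduces $|J_G(\bF_q)|$ to $q^{2n}$ in the adjoint case; for general $\mathbf{N}$ no such combinatorial shortcut is available, and the flavor-fixed loci together with their attracting cells need not be products of affine spaces and tori. Controlling the weights on these strata and, critically, ruling out mixed extensions between them in the weight spectral sequence is where most of the substantive work will lie. A secondary subtlety is that genuinely singular (unresolvable) Coulomb branches should be treated via intersection cohomology, so the conjecture is most naturally formulated for $IH^*$, which requires an additional decomposition-theorem step in going between $\mathcal{M}_C$ and $\widetilde{\mathcal{M}}$.
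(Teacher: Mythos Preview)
The statement you are addressing is a \emph{conjecture} in the paper; the paper does not prove it and offers no argument beyond the motivating analogy with Hitchin systems and the special case $J_{G_{\ad}}$ treated in Theorem~\ref{prop: RHJ_G_ad}. There is therefore no proof in the paper to compare your proposal against.

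What you have written is not a proof but a research outline, and you are candid about this in your final paragraph. The genuine gap is exactly where you locate it: Step~2 is entirely programmatic. The assertion that the Bia\l ynicki--Birula fixed loci of a generic cocharacter on $\mathcal{M}_C(G,\mathbf{N})$ ``should be'' Coulomb branches for Levi subgroups with reduced matter is not established anywhere, and even granting that, purity of those fixed loci does not by itself imply purity of the total space without control on the attracting-cell filtration (you need the strata to behave like affine bundles, not merely to be pure). For $J_G$ this control comes from the explicit Bruhat decomposition of Proposition~\ref{prop: Bruhat}, which has no known analogue for general matter. Step~1 also leans on the unwritten \cite{Jin2}; in particular the claim that ``the weight spectral sequence degenerates'' is precisely what needs to be shown and is not a formal consequence of the strata being Hodge--Tate.

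In short: your proposal is a reasonable sketch of how one might \emph{attack} the conjecture, and it correctly identifies the parabolic-induction mechanism of \S\ref{subsubsec: Induction pattern}--\S\ref{subsubsec: handle} as the template, but it does not constitute a proof, and the paper makes no claim to have one either.
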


\subsubsection*{{\bf Acknowledgement}}
I am grateful to Maxence Mayrand, Junliang Shen and Zhiwei Yun for useful discussions related to $\sfE$-polynomials and mixed Hodge structures. I also thank George Lusztig for his interest in this work.

\section{Preliminaries}

\subsection{Set-up}\label{subsec: set-up}
For any complex algebraic group $H$, let $H_0$ be the identity component. When $H$ is reductive, let $H_{\cpt}$ be the maximal compact subgroup of $H$. 
Let $G$ be a connected complex semisimple group with Lie algebra $\fg$. Let $G_\ad$ (resp. $G_{sc}$) be the adjoint form (resp. simply connected form) of $G$. Fix a maximal torus and a Borel subgroup $T\subset B \subset G$ with Lie algebras $\ft\subset \fb\subset \fg$, and let  $\Pi$ be the associated set of simple roots. Let $W=N_G(T)/T$ be the Weyl group. For any $\alpha\in \Pi$, let $\lambda_{\alpha^\vee}$ be the fundamental weight dual to $\alpha^\vee$. Let $N\subset B$ be the unipotent radical of $B$ and let $\fn$ be the Lie algebra of $N$ (similarly, we have the opposite Borel $B^-$ and $N^-\subset B^-$ with Lie algebras $\fn^-\subset \fb^-$). 
For any closed subgroup $H\subset G$ that contains $Z(G)$ (the center of $G$), let $H_\ad$ (resp. $H_{sc}$) be the corresponding quotient in $G_{\ad}$ (resp. preimage in $G_{sc}$). 

For any $S\subset \Pi$, let $L_S$ be the standard Levi subgroup associated to $S$. Let $W_S\subset W$ be the Weyl group of $L_S$, which is the subgroup generated by simple reflections from $S$. Let $w_0\in W$ (resp. $w_S\in W_S$) be the longest element. Let $L_S^\der=[L_S, L_S]$ be the derived subgroup of $L_S$. Let $L_{S;\ad}=L_S/Z(L_S)$. Let $Z(L_S^\der)_0=Z(L_S^\der)\cap Z(L_S)_0$. 
For any $w\in W$, let $\dot{w}$ be a lifting of $w$ in $N_G(T)$. 

For any finite nonempty set $Q$, let $\fC^{Q_\dagg}$ be the standard simplex with vertices indexed by elements from $Q$ (so the dimension of the simplex is $|Q|-1$). 
For any $S\subsetneq \Pi$, set $\fC_{S}:=(\fC^{(\Pi-S)_\dagg})^\circ$ be the interior of the face in $\fC^{\Pi_\dagg}$ whose set of vertices is  $\Pi-S$. For example, if $S=\emptyset$, then $\fC_{\emptyset}$ is the interior of $\fC^{\Pi_\dag}$. The collection $\{\fC_S\}_{S\subsetneq \Pi}$ gives a standard stratification of $\fC^{\Pi_\dag}$. 

For any complex torus $T'$, let $X_\bullet(T')$ (resp. $X^\bullet(T')$) be the cocharacter lattice (resp. character lattice). For the maximal torus $T$ as above, let $X^+(T)$ be the semi-group of dominant characters with respect to $\Pi$.  

Let $\fg^{\reg}$ be the open subset of regular elements in $\fg$. Let $\{e,f, h\}$ be a fixed principal $\fsl_2$-triple with $e\in \bigoplus_{\alpha_i\in \Pi} \fg_{\alpha_i}$. Then $e$ has a nonzero component in every $\fg_{\alpha_i}$;  $f\in \bigoplus_{\alpha_i\in \Pi} \fg_{-\alpha_i}$ with a nonzero component in every $\fg_{-\alpha_i}$;  $h=2\delta^\vee\in \ft$, where $\delta^\vee$ is the sum of all fundamental coweights. 

Recall the Kostant slice $\cS=f+\ker \ad_e\subset \fg^\reg$, and the $N$-equivariant isomorphism $N\times \cS\overset{\sim}{\to}f+\fb$ taking $(u, \xi)\in N\times \cS$ to $\Ad_u\xi$. The Kostant slice has the important property that the composition
\begin{align*}
\cS\hookrightarrow \fg^\reg\to  \fg^\reg\sslash G\cong \fg\sslash G\cong \ft\sslash W
\end{align*}
is an isomorphism. In other words, $\cS$ is parametrizing \emph{regular} adjoint orbits in $\fg$.

\subsection{Review of the geometry of $J_G$}

In this subsection, we review some important geometric properties of $J_G$, especially the parabolic induction pattern and handle attachment structure. For more details, see \cite[Section 2]{Jin}. 

\subsubsection{Two equivalent definitions of $J_G$}\label{subsubsec: def of J_G}
Recall the two equivalent definitions of $J_G$:
\begin{itemize}
\item[(i)] $J_G:=\{(g,\xi): \Ad_g\xi=\xi\}\subset G\times \cS$;\\
\item[(ii)] Let $\mu: T^*G\to \fn^*\times \fn^*\cong \fn^-\times\fn^-$ be the moment map of the Hamiltonian $N\times N$-action on $T^*G$ induced from the left and right $N$-action on $G$. Then $(f,f)\in  \fn^-\times\fn^-$ is a regular Lie algebra character of $\fn\times \fn$, and $N\times N$-acts freely on $\mu^{-1}(f,f)$. Then 
\begin{align*}
J_G:=\mu^{-1}(f,f)/N\times N\cong\{(g,\xi): \xi\in f+\fb, \Ad_g\xi\in f+\fb\}/N\times N,
\end{align*}
which is called the \emph{bi-Whittaker reduction}. 
\end{itemize}

Definition (ii) endows $J_G$ with a canonical holomorphic symplectic form. 
Let $\chi: J_G\to\ft\sslash W, (g,\xi)\mapsto [\xi]$ be the characteristic map. This is a completely integrable system, in particular a commutative group scheme over the base. Over any closed point in the open locus $\ft^{\reg}\sslash W$ parametrizing regular semsimple conjugacy classes, the fiber is isomorphic to a maximal torus in $G$. 
Let $[0]$ be the image of $0\in \ft$ in $\ft\sslash W$. Then $\chi^{-1}([0])\cong C_G(f)\cong Z(G)\times C_G(f)_0$, where $C_G(f)_0$ is an abelian unipotent subgroup isomorphic to $\bG_a^{\rank G}$. 

Using Definition (i), we also have the Kostant sections $\Sigma_z:=\{(g=z, \xi):\xi\in \cS\}$ for $z\in Z(G)$. 

\subsubsection{The canonical $\bC^\times$-action on $J_G$}

There is a canonical $\bC^\times$-action on $J_G$ that scales the holomorphic symplectic form of $J_G$ by weight $2$.
Using definition (i) or (ii) in \S\ref{subsubsec: def of J_G} and the principal $\fsl_2$-triple in \S\ref{subsec: set-up}, it is defined as follows: 
\begin{align*}
s\cdot (g,\xi)=(\Ad_{s^h}g, s^2\Ad_{s^h}(\xi)), s\in \bC^\times.
\end{align*}
Its fixed points are $\{(g=z, \xi=f): z\in Z(G)\}$.

\subsubsection{The map $b_G$ and its tropicalization $|b_G|$}\label{subsubsec: b_G}
Definition (ii) of $J_G$ above induces a natural algebraic map 
\begin{align*}
b_G: J_G\to \Spec \cO(G/N)^N\cong \Spec\ \bC[X^+(T)],
\end{align*}
which is equivariant with respect to the canonical $\bC^\times$-action on $J_G$ and the induced one on $\Spec\ \bC[X^+(T)]$ (with $s\cdot x^{\lambda}=s^{-2\lng\lambda,h\rng}x^\lambda$, for $s\in \bC^\times$ and $\lambda\in X^+(T)$). 
Using that $X^+(T_{sc})=\sum_{\alpha\in \Pi}\bZ_{\geq 0}\lambda_{\alpha^\vee}$, the semi-group freely generated by the fundamental weights, there is a canonical isomorphism $\Spec\ \bC[X^+(T_{sc})]\cong \bA^{\Pi}$. 
Then $\Spec\ \bC[X^+(T)]= (\Spec\ \bC[X^+(T_{sc})])\sslash Z(G)\cong \bA^{\Pi}\sslash Z(G)$. Composing $b_G$ with the projection to the norm of the standard affine coordinates of $\bA^{\Pi}$ (which clearly descend to $\bA^{\Pi}\sslash Z(G)$), we get the tropicalization of $b_G$: 
\begin{align*}
|b_G|: J_G\to \bR_{\geq 0}^\Pi.
\end{align*}
The target  $\bR_{\geq 0}^\Pi$ is naturally stratified by $\bR_{>0}^{\Pi-S}\times \{0\in \bR_{\geq 0}^{S}\}$, for $S\subset\Pi$. Let $U_S:=\bigcup_{S^\dag\subset S}\bR_{>0}^{\Pi-{S^\dag}}\times \{0\in \bR_{\geq 0}^{S^\dag}\}$ be the open subset $\bR_{\geq 0}^\Pi$ consisting of strata indexed by $S^\dag\subset S$. 

\subsubsection{Parabolic induction pattern}\label{subsubsec: Induction pattern}

The following proposition combines  \cite[Proposition 2.3 and 2.6]{Jin} (see also \cite{Teleman}).
\begin{prop}\label{prop: Bruhat}
\begin{itemize}
\item[(i)] Using the bi-Whittaker reduction realization of $J_G$, we have $(g,\xi)\in |b_G|^{-1} (\bR_{>0}^{\Pi-S}\times \{0\in \bR_{\geq 0}^{S}\})$ if and only if $g\in B\dot{w}_0\dot{w}_S B$. This gives a Bruhat decomposition of $J_G=\bigsqcup_{S\subset\Pi}\cB_{w_0w_S}$, where 
\begin{align*}
\cB_{w_0w_S}:=|b_G|^{-1} (\bR_{>0}^{\Pi-S}\times \{0\in \bR_{\geq 0}^{S}\})\cong \Sigma_{I;S}\times T^*Z(L_S)\cong \fl_S^\der\sslash L_S^\der\times T^*Z(L_S),
\end{align*}
where $\Sigma_{I;S}$ is the identity Kostant section of $J_{L_{S}^\der}$. 
In particular, $|b_G|^{-1}(0)=\bigsqcup_{z\in Z(G)}\Sigma_z$.  

\item[(ii)] By choosing appropriate liftings $\dot{w}_S, S\subset \Pi$, there is a natural isomorphism of holomorphic symplectic varieties 
\begin{align*}
|b_G|^{-1}(U_S)\cong J_{L_S}=J_{L_S^\der}\overset{Z(L_S^\der)}{\times}T^*Z(L_S). 
\end{align*}

\item[(iii)]
For $S_1\subset S_2$, let $L_{S_2}^{S_1}=L_{S_1}\cap L_{S_2}^\der$. There is a compatible system of open inclusions $J_{L_{S_1}^{S_2}}\hookrightarrow J_{L_{S_2}^\der}$ for all pairs of $S_1\subset S_2$, so that the following diagram commutes
\begin{equation*}
\begin{tikzcd}
{|b_G|^{-1}(U_{S_1})}\ar[d, equal, "\wr{}"] \ar[r, hook]& {|b_G|^{-1}(U_{S_2})}\ar[d, equal, "\wr{}"]\\
{J_{L_{S_1}^{S_2}}\overset{Z(L_{S_2}^\der)}{\times}T^*Z(L_{S_2})}\ar[r, hook]&{J_{L_{S_2}^\der}\overset{Z(L_{S_2}^\der)}{\times}T^*Z(L_{S_2})}. 
\end{tikzcd}
\end{equation*}

\end{itemize}
\end{prop}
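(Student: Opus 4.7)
The plan is to proceed by induction on $n = \rank G$, with base case $n = 0$ immediate. For the inductive step, I would prove the following strengthened statement simultaneously for every standard Levi of $G$: if $G$ is adjoint of rank $n$ and $S \subseteq \Pi$, then
\[
H^*(J_{L_S}; \bQ) \cong H^*(Z(L_S)_\cpt; \bQ) \cong \bigwedge\nolimits^{\bullet} \bQ^{\,n-|S|}.
\]
The case $S = \Pi$ recovers Theorem \ref{prop: RHJ_G_ad} (since $Z(G)_\cpt$ is trivial when $G$ is adjoint); $S = \emptyset$ reduces to $J_T = T^*T \simeq (S^1)^n$; intermediate $S$ handle the proper Levi.

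For $|S| < n$, I would reinterpret the parabolic induction formula $J_{L_S} = J_{L_S^\der} \times^{Z(L_S^\der)} T^*Z(L_S)$ as a fiber bundle
\[
\pi\colon J_{L_S} \longrightarrow J_{L_S^\der}/Z(L_S^\der) = J_{L_{S;\ad}}
\]
with fiber $T^*Z(L_S) \simeq (S^1)^{n-|S|}$. The identification of the quotient with $J_{L_{S;\ad}} = J_{(L_S^\der)_\ad}$ holds because central translation by $Z(L_S^\der)$ is free and preserves the centralizer condition. The base involves the adjoint semisimple group of rank $|S| < n$, so $H^*(J_{L_{S;\ad}}; \bQ) = \bQ$ by the inductive hypothesis. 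Parallel transport around loops of the base is computed by lifting them through the finite \'etale cover $J_{L_S^\der} \to J_{L_{S;\ad}}$; the resulting deck action of $Z(L_S^\der)$ on the fiber is by translation on $Z(L_S) \subset T^*Z(L_S)$, hence homotopic to the identity and acting trivially on rational cohomology. The Leray spectral sequence for $\pi$ therefore degenerates at $E_2$ and delivers the strengthened statement.

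The inductive step for $L_\Pi = G$ itself uses a Mayer-Vietoris argument with the $\bC^\times$-equivariant open cover $\{V_\alpha\}_{\alpha \in \Pi} \cup \{W\}$, where $V_\alpha := |b_G|^{-1}(U_{\Pi \setminus \{\alpha\}}) \cong J_{L_{\Pi \setminus \{\alpha\}}}$ and $W$ is a tubular neighborhood of $\Sigma_1 := |b_G|^{-1}(0) \cong \cS \cong \bA^n$. For $G$ adjoint, the unique $\bC^\times$-fixed point $(1,f)$ lies in $\Sigma_1$, and the canonical $\bC^\times$-action deformation retracts $W$ onto $\Sigma_1$, so $W$ is contractible. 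By Proposition \ref{prop: Bruhat}(iii), the $k$-fold intersections $V_{\alpha_1} \cap \cdots \cap V_{\alpha_k}$ identify with $J_{L_{\Pi \setminus \{\alpha_1, \ldots, \alpha_k\}}}$, whose cohomology is controlled by the strengthened statement. The resulting \v{C}ech-to-derived spectral sequence reduces, via the exterior-algebra structure on $H^*((S^1)^{k}; \bQ)$, to the reduced rational cohomology of the standard simplex $\fC^{\Pi_\dag}$, whose contractibility produces the cancellation yielding $H^*(J_G; \bQ) = \bQ$.

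The main obstacle is the \v{C}ech combinatorial verification: one must identify the restriction maps $H^*(J_{L_{S'}}; \bQ) \to H^*(J_{L_S}; \bQ)$ (for $S \subset S'$), as presented by the strengthened statement, with the face-inclusion maps on exterior algebras encoded by $\fC^{\Pi_\dag}$. The handle-attachment description in \S\ref{subsubsec: handle} is expected to supply the explicit structure of these restriction maps, after which the simplex-cohomology cancellation is a direct computation. A secondary subtlety is ensuring the tubular neighborhood $W$ can be chosen so that the intersections $W \cap V_{\alpha_1} \cap \cdots \cap V_{\alpha_k}$ retract compatibly, so that $W$ contributes only trivially to the spectral sequence beyond pinning down the bottom Gysin class of $\Sigma_1$.
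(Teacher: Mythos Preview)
Your proposal does not address the stated proposition at all. Proposition~\ref{prop: Bruhat} is a structural result about $J_G$---the Bruhat stratification, the parabolic-induction isomorphisms $|b_G|^{-1}(U_S)\cong J_{L_S}$, and their compatibility under inclusion of subsets $S_1\subset S_2$. In the paper this proposition is not proved; it is imported from \cite{Jin} (Propositions~2.3 and~2.6 there), so the paper's ``proof'' is a citation. What you have written is instead a proof strategy for Theorem~\ref{prop: RHJ_G_ad}, the main cohomological result, and indeed your own text invokes Proposition~\ref{prop: Bruhat}(iii) as an input rather than as something to be established.

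If one reads your proposal as an attempt at Theorem~\ref{prop: RHJ_G_ad}, it is close in spirit to the paper's argument but organised differently. Both proceed by induction on rank and feed the parabolic-induction pattern into a simplicial Mayer--Vietoris computation. The paper, however, first \emph{removes} the Kostant section and computes $H_*(\cC_G;\bQ)\cong H_*(S^{2n-1};\bQ)$ as the colimit \eqref{eq: colimit, S}, which it identifies on the nose with the colimit computing $H_*(\bA^\Pi\setminus\{0\};\bQ)$; it then reattaches a single $2n$-cell and checks nontriviality of its attaching sphere by a transversality count (intersection number $|W|$ with a generic cotangent fiber). Your version keeps the Kostant section inside a tubular neighbourhood $W$ and runs a \v{C}ech spectral sequence for $\{V_\alpha\}\cup\{W\}$. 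The paper's route sidesteps both difficulties you flag: it never needs to control the mixed intersections $W\cap V_{\alpha_1}\cap\cdots\cap V_{\alpha_k}$, and the handle-attachment step replaces your unresolved differential analysis with one explicit intersection number already available from \cite{Jin}. Your ``main obstacle''---matching restriction maps to face inclusions on exterior algebras---is exactly what the paper handles by the one-line observation that the functor \eqref{eq: colimit, S} also computes the homology of $\bA^\Pi\setminus\{0\}$.
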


\begin{remark}
Another application of Proposition \ref{prop: Bruhat} (ii) and (iii) is that it gives an alternative description of the partial log-compactification given in \cite{Balibanu}\footnote{This discussion is independent with the rest of the paper, so the reader can safely skip it.}. Namely, for $G$ of adjoint form, 
\begin{align}\label{eq: J_G, log}
\overline{J}^{\log{}}_G=\bigcup_{S\subset \Pi} J_{L_S^\der}\overset{Z(L_S^\der)}{\times}T_D^*\overline{Z(L_S)}^0
\end{align}
where (1) $\overline{T}^0=\Spec\ \bC[\bZ_{\leq 0}^{\Pi}]$ is the partial compactification of $T$ in the big open cell $X_0$ of the wonderful compactification of $G$ (as in \cite[\S 2.2]{EJ}), and $\overline{Z(L_S)}^0\subset \overline{T}^0$ is the closed subvariety defined by $\alpha=1$ for $\alpha\in S$, which is a partial compactification of $Z(L_S)$; (2) $T_D^*\overline{Z(L_S)}^0$ is the log cotangent bundle associated with the normal crossing divisor $D=\overline{Z(L_S)}^0-Z(L_S)$. The gluing of the open affine pieces on the right-hand-side of \eqref{eq: J_G, log} is through the obvious open embeddings in the following correspondence, for any pair $S_1\subset S_2$: 
\begin{equation*}
\begin{tikzcd}
J_{L_{S_1}^\der}\overset{Z(L_{S_1}^\der)}{\times}T_D^*\overline{Z(L_{S_1})}^0&\ar[l, hook']J_{L_{S_1}^{S_2}}\overset{Z(L_{S_2}^\der)}{\times}T^*_D\overline{Z(L_{S_2})}^0\ar[r, hook]&J_{L_{S_2}^\der}\overset{Z(L_{S_2}^\der)}{\times}T^*_D\overline{Z(L_{S_2})}^0. 
\end{tikzcd}
\end{equation*}
The identification of $\overline{J}^{\log{}}_G$ with the log-compactification of Balibanu can be seen as follows. First, there is a well defined open embedding $\overline{J}^{\log{}}_G$ into the Whittaker reduction of $T^*_D\overline{G}$ in \cite[\S 3.2]{Balibanu}, using an extension of the formula \cite[(2.2.8)]{Jin}\footnote{One should first compose the cited formula with the embedding $G\times \fg\hookrightarrow G\times \fg\times \fg, (g, \xi)\mapsto (g, \xi, \Ad_g\xi)$, and then do the unique (well defined) extension  $T^*_D\overline{Z(L_S)}^0\times^{Z(L_S^\der)} \mu_{N_S\times N_S}^{-1}(f_S, f_S)\to\overline{G}\times\fg\times\fg$. } for each open piece on the right-hand-side of \eqref{eq: J_G, log}. Second, using the affine paving $X_J, J\subset \Pi$ of the log-compactification coming from the $\bC^\times$-action \cite[Proposition 4.11]{Balibanu}, it is easy to see that $X_J=\Sigma_{I;S}\times T^*_D\overline{Z(L_S)}^0$ for $S=\Pi-J$. Therefore, the embedding is an isomorphism. 

Note that the above perspective will greatly simplify the proof of \cite[Proposition 3.6]{Jin}, which will be included in a newer version of that paper soon. 
\end{remark}

\subsubsection{The complement of the Kostant sections}\label{subsubsec: complement Kostant}
Consider the map induced by $|b_G|$: 
\begin{align*}
\pi_{\cC_G}: \cC_G:=(J_G-\bigcup_{z\in Z(G)}\Sigma_z)/\bR_+\longrightarrow (\bR_{\geq 0}^\Pi-\{0\})/\bR_+\cong \fC^{\Pi_\dag}.
\end{align*}
where the identification $(\bR_{\geq 0}^\Pi-\{0\})/\bR_+\cong \fC^{\Pi_\dag}$ identifies $(\bR_{>0}^{\Pi-S}\times \{0\in \bR_{\geq 0}^{S}\})/\bR_+$ with $\fC_S$, for $S\subsetneq \Pi$. Choose any proper strictly positive homogeneous function $r$ on $\bR_{\geq 0}^\Pi-\{0\}$, then we can identify $\cC_G$ with $|b_G|^{-1}(r^{-1}(1))$. In particular, $\cC_G$ is a smooth (real contact) manifold. 

For any $S\subsetneq \Pi$, let $U_{S;\fC}=U_S/\bR_+$. Then for any $S_1\subset S_2\subsetneq \Pi$, we have the natural commutative diagram
\begin{equation}\label{eq: diagram pi_cC, U}
\begin{tikzcd}
{\pi_{\cC_G}^{-1}(U_{S_1;\fC})}\ar[r, hook]& {\pi_{\cC_G}^{-1}(U_{S_2;\fC})}\\
{J_{L_{S_1}^{\der}}\overset{Z(L_{S_1}^\der)_0}{\times}T^*Z(L_{S_1})_{0}}\ar[r, hook]\ar[u, "\overset{h.e.}{\sim}"] &{J_{L_{S_2}^\der}\overset{Z(L_{S_2}^\der)_0}{\times}T^*Z(L_{S_2})_{0}}\ar[u, "\overset{h.e.}{\sim}"],
\end{tikzcd}
\end{equation}
where $h.e.$ stands for homotopy equivalence.

\subsubsection{Handle attachment}\label{subsubsec: handle}

Let $\chi^{-1}([0])_z$ be the component of the central fiber of $\chi$ that contains $(z, f)$. Let $\chi^{-1}([0])_z^c:=\chi^{-1}([0])_z\cap |b_G|^{-1}(r^{-1}([0,1]))$. 
Then $J_{G}$ is  obtained as a topological space by attaching the (real) $2n$-dimensional cell $\chi^{-1}([0])_z^{c}$, for each $z\in Z(G)$, to $J_G-\bigcup_{z\in Z(G)}\Sigma_z$. 

Let $F_{h'}$ be a generic cotangent fiber in $|b_G|^{-1}(U_\emptyset)\cong T^*T$. 
Then $\chi^{-1}([0])_z$ and $F_{h'}$ are complex (Lagrangian) subvarieties in $J_G$, and $\chi^{-1}([0])_z\cap F_{h'}$ transversely in $\frac{|W|}{|Z(G)|}$-many points (cf. \cite[Proof of Proposition 5.2 in \S 6.3]{Jin} for $G$ of adjoint form; the general case follows easily from it). Note that $\chi^{-1}([0])_z$ is invariant under the canonical $\bC^\times$-action but $F_{h'}\subset |b_G|^{-1}(r^{-1}(\epsilon))$, for some $\epsilon\in \bR_+$, is not. 

\section{Proof of main result and some direct consequences}

In this section, we give the proof of the main result Theorem \ref{prop: RHJ_G_ad}. We will also deduce some direct consequences.

\begin{proof}[Proof of Theorem \ref{prop: RHJ_G_ad}]
We prove by induction on the rank of $G=G_{\ad}$.  For the case of rank $0$, there is nothing to prove. 
The rank $1$ case is also not hard to obtain:  $J_{PGL_2}\overset{h.e.}{\simeq} \bR\bP^2$, hence the statement holds\footnote{See \cite[Figure 2]{Jin} for a Lagrangian skeleton of $J_{SL_2}$ as a Weinstein sector; the quotient of the skeleton by the obvious free $\bZ/2\bZ$-symmetry (that identifies one cap to the other) gives the Lagrangian skeleton of $J_{PGL_2}$.}. Now assume $\rank G=n\geq 1$, and let $\Sigma_I$ be the Kostant section. Note that in this case every $Z(L_S)$ is connected.

Recall the notations from \S\ref{subsubsec: complement Kostant}. We will first use Mayer-Vietoris to calculate $H_{*}(\cC_G,\bQ)$ in the language of constructible (co)sheaves\footnote{All (co)sheaves are by default objects in the dg-category of (co)sheaves. All functors between sheaf categories are derived.}. 
For any locally compact Hausdorff space $X$, let $\omega_{X;\bQ}$ be the dualizing sheaf on $X$ over $\bQ$. Then $(\pi_{\cC_G})_!\omega_{\cC_G;\bQ}$ gives a constructible cosheaf (by taking $\Gamma_c$ on open subsets) on the simplex $\fC^{\Pi_\dagg}$ stratified by the faces $\fC_S$ indexed by $S\subsetneq \Pi$. 

 Then
\begin{align}\label{eq: Gamma_cUSad}
\Gamma_c(U_{S;\fC}, (\pi_{\cC_G})_!\omega_{\cC_G;\bQ})&\cong H_*\left(J_{L_S^\der}\overset{\cZ(L_S^\der)}{\times} T^*\cZ(L_S), \bQ\right)\\
\nonumber&\cong \left(H_*(J_{L_S^\der},\bQ)\otimes H_*(\cZ(L_S),\bQ)\right)^{\cZ(L_S^\der)}. 
\end{align}

Since $\cZ(L_S)\cong (\bC^\times)^{n-|S|}$, and from induction $H_*(J_{L_{S,\ad}}, \bQ)\cong \bQ$, 
\begin{align*}
H_{-*}(J_{L_S^\der}\overset{\cZ(L_S^\der)}{\times} T^*\cZ(L_S), \bQ)\cong H_{-*}(\cZ(L_S),\bQ)\cong  \Lambda^{*} (X_\bullet(\cZ(L_S))\otimes_\bZ \bQ[1]).
\end{align*}

In view of diagram \eqref{eq: diagram pi_cC, U}, $H_*\left(\cC_G, \bQ)\cong \Gamma_c(\fC^{\Pi_\dag}, (\pi_{\cC_G})_!\omega_{\cC_G;\bQ}\right)$ can be computed by the (homotopy) colimit of the diagram 
\begin{align}\label{eq: colimit, S}
(\{S\subsetneq \Pi\}, \subset)&\longrightarrow \text{Vect}_\bQ:=\text{the dg-category of }\bQ\text{-modules}\\
\nonumber S&\mapsto  H_{-*}(\cZ(L_S),\bQ)\cong \Lambda^{*} (X_\bullet(\cZ(L_S))\otimes \bQ[1]),
\end{align}
where the morphism $H_*(\cZ(L_{S}), \bQ)\rightarrow H_*(\cZ(L_{S'}),\bQ)$ for $S\subset S'$ is induced from the orthogonal projection $X_\bullet(\cZ(L_S))\otimes \bQ\rightarrow X_\bullet(\cZ(L_{S'}))\otimes \bQ$ with respect to the Killing form. 

On the other hand, using the set-up from \S\ref{subsubsec: b_G}, the colimit of \eqref{eq: colimit, S} is also calculating 
\begin{align*}
H_*(\bA^{\Pi}-\{0\},\bQ)\cong H_*(S^{2n-1},\bQ).
\end{align*}
Hence we get 
\begin{align*}
H_*(\cC_G, \bQ)\cong H_*(S^{2n-1},\bQ). 
\end{align*}

Lastly, using the handle attachment feature reviewed in \S\ref{subsubsec: handle},  it suffices to show that $\partial\chi^{-1}([0])^c$ is nontrivial in $H_{2n-1}(J_G-\Sigma_I, \bQ)\cong H_{2n-1}(\cC_G,\bQ)\cong H_{2n-1}(S^{2n-1},\bQ)\cong \bQ$.
In the following, we identify $\cC_G\cong |b_G|^{-1}(r^{-1}(1))$. 
 Let $F_{h'}$ be a generic cotangent fiber in $|b_G|^{-1}(U_\emptyset)\cong T^*T$.

Since $\chi^{-1}([0])\cap F_{h'}$ transversely in $|W|$-many points (cf. \S\ref{subsubsec: handle}), we can choose compatible orientation on $\partial\chi^{-1}([0])^c=\chi^{-1}([0])\cap \cC_G$ and co-orientation on $\bR_+\cdot F_{h'}$, so that 
the corresponding $(2n-1)$-cycle $[\chi^{-1}([0])\cap \cC_G]$ and $(2n-1)$-cocycle $[\bR_+\cdot F_{h'}]$ (equivalently, $(2n+1)$-Borel-Moore cycle) in $J_G-\Sigma_I$ satisfy 
\begin{align*}
[\chi^{-1}([0])\cap \cC_G]\cap [\bR_+\cdot F_{h'}]=|W|. 
\end{align*}
This shows that $H_{2n-1}(J_G-\Sigma_I,\bQ)\ni [\partial \chi^{-1}([0])^c]\neq 0$, and we have $H_*(J_{G}, \bQ)\cong \bQ$ as desired. 
\end{proof}

\begin{cor}\label{cor: pi0ZLStrivial,H_*}
Given any complex semisimple $G$ of rank $n$, assume that for any $S\subsetneq \Pi$, $\pi_0(\cZ(L_S))=1$, then 
\begin{align*}
H_{-*}(J_G,\bQ)\cong \bQ^{(|\cZ(G)|-1)}[2n]\oplus \bQ.
\end{align*}
Moreover, $H^*(J_G,\bQ)\cong H_{4n-*}^{BM}(J_G,\bQ)$ has a basis represented by the algebraic Borel-Moore cycles $[\Sigma_z],z\in Z(G)-\{1\}$ and $[J_G]$. Hence it has a pure Hodge structure. 
\end{cor}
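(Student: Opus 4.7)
The plan is to adapt the strategy of Theorem \ref{prop: RHJ_G_ad} to the weaker hypothesis and then analyze the $|\cZ(G)|$-fold handle attachment to extract the explicit basis of Borel--Moore cycles. I would re-run the Mayer--Vietoris/colimit calculation from the proof of Theorem \ref{prop: RHJ_G_ad} to compute $H_*(\cC_G, \bQ)$, noting that the argument only involves Levi subgroups $L_S$ with $S \subsetneq \Pi$, and the corollary's hypothesis guarantees $\cZ(L_S) \cong (\bC^\times)^{n-|S|}$ in exactly the required range. In place of the induction on rank, I invoke Theorem \ref{prop: RHJ_G_ad} itself to supply $H_*(J_{L_{S,\ad}}, \bQ) \cong \bQ$ for each $S \subsetneq \Pi$, since every such $L_{S,\ad}$ is a proper adjoint semisimple group. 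The colimit \eqref{eq: colimit, S} then again yields $H_*(\cC_G, \bQ) \cong H_*(S^{2n-1}, \bQ)$.

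Next, by the handle attachment description in \S\ref{subsubsec: handle}, $J_G$ is built from a neighborhood of $\cC_G$ by attaching $|\cZ(G)|$ real $2n$-cells $\chi^{-1}([0])_z^c$, indexed by $z \in \cZ(G)$. Pairing the attaching classes with the Borel--Moore cycle $[\bR_+ \cdot F_{h'}]$ and using the transversality count $|\chi^{-1}([0])_z \cap F_{h'}| = |W|/|\cZ(G)|$ from \S\ref{subsubsec: handle}, I would show that each $[\partial\chi^{-1}([0])_z^c]$ equals the same nonzero multiple $|W|/|\cZ(G)|$ of the generator of $H_{2n-1}(\cC_G, \bQ) \cong \bQ$. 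Hence attaching the first cell (say $z=1$) kills $H_{2n-1}$, and the remaining $|\cZ(G)| - 1$ cells yield absolute $2n$-cycles $[\chi^{-1}([0])_z^c - \chi^{-1}([0])_1^c]$ for $z \neq 1$, giving $H_{-*}(J_G, \bQ) \cong \bQ \oplus \bQ^{(|\cZ(G)|-1)}[2n]$.

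For the basis and purity statement, Poincar\'e duality $H^k(J_G, \bQ) \cong H^{BM}_{4n-k}(J_G, \bQ)$ (valid on the smooth complex $2n$-fold $J_G$) identifies $[J_G] \in H^{BM}_{4n}$ with the generator of $H^0$, and each closed complex subvariety $\Sigma_z$ with a class in $H^{BM}_{2n}$. Since $\Sigma_z \cap \chi^{-1}([0])_{z'} = \{(z,f)\}$ transversely when $z = z'$ and is empty otherwise (because $\cS \cap \chi^{-1}([0]) = \{f\}$), the intersection pairing of $[\Sigma_z]$ with the absolute cycles $[\chi^{-1}([0])_{z'}^c - \chi^{-1}([0])_1^c]$ yields the matrix $(\delta_{z,z'} - \delta_{z,1})_{z,z' \in \cZ(G) - \{1\}}$, which is the identity. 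This identifies $\{[\Sigma_z] : z \neq 1\}$ as a basis of $H^{BM}_{2n}$ (with the unique relation $\sum_{z \in \cZ(G)} [\Sigma_z] = 0$), and purity of the MHS is automatic: $[J_G]$ gives the pure weight-$0$ class $1 \in H^0$, and each $[\Sigma_z]$ corresponds under Poincar\'e duality to an algebraic class in $H^{2n}$ of Hodge type $(n,n)$, hence pure of weight $2n$.

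The step I expect to require most care is the intersection-pairing computation in the last paragraph, specifically confirming the orientation signs at the transverse intersection points so that the pairing matrix comes out precisely as $(\delta_{z,z'} - \delta_{z,1})$. This should follow from the standard fact that the local intersection number of two transverse complex submanifolds at a point equals $+1$, but it warrants careful checking. All other steps are direct specializations of ingredients developed in Theorem \ref{prop: RHJ_G_ad} and \S\ref{subsubsec: handle}.
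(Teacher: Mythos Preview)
Your proposal is correct and follows essentially the same approach as the paper: the same colimit computation for $H_*(\cC_G,\bQ)$ via Theorem~\ref{prop: RHJ_G_ad} applied to each $L_{S,\ad}$, the same handle-attachment and intersection-pairing argument against $[\bR_+\cdot F_{h'}]$, and the same dual-basis identification via the Kostant sections $\Sigma_z$. One cosmetic point: the difference $\chi^{-1}([0])_z^c - \chi^{-1}([0])_1^c$ is not literally a cycle (its boundary is only null-homologous in $\cC_G$), so the paper caps it with an auxiliary $2n$-chain $\eta_j$ supported in the complement of the Kostant sections (see~\eqref{eq: H_2nbasis}), but this does not affect your intersection computation with $[\Sigma_z]$.
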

\begin{proof}
By assumption on the triviality of $\pi_0(\cZ(L_S))$, we can apply the same argument as for Theorem \ref{prop: RHJ_G_ad}, and get 
\begin{align*}
H_*(\cC_G, \bQ)\cong H_*(\bA^{\Pi}-\{0\}, \bQ)\cong H_*(S^{2n-1}, \bQ).
\end{align*}
Now $J_G$ is from attaching $|\cZ(G)|$ many $2n$-dimensional handles $\chi^{-1}([0])^c_z, z\in \cZ(G)$ to $J_G-\bigcup_{z\in \cZ(G)}\Sigma_z$. Again using a generic cotangent fiber $F_{h'}$ in $|b_G|^{-1}(U_\emptyset)\cong T^*T$, and using the action of $\cZ(G)$ on $J_G$, we get the intersection number in $J_G-\bigcup_{z\in \cZ(G)}\Sigma_z$: 
\begin{align*}
[\partial \chi^{-1}([0])^c_z]\cap [\bR_+\cdot F_{h'}]=\frac{|W|}{|\cZ(G)|},\ \forall z\in \cZ(G). 
\end{align*}
Fix an ordering of $\cZ(G)$ as $\{I=z_1,\cdots, z_{|\cZ(G)|}\}$. Then $H_{2n-1}(J_G, \bQ)=0$, and $H_{2n}(J_G, \bQ)$ has a basis given by the classes of the cycles 
\begin{align}\label{eq: H_2nbasis}
\mathsf{C}_j:=\chi^{-1}([0])_{z_j}-\chi^{-1}([0])_{z_1}-\eta_j,\ 1<j\leq |\cZ(G)|, 
\end{align}
for a $2n$-chain $\eta_j\in C_{2n}(J_G-\Sigma_I,\bQ)$ (clearly, $\eta_j$ is unique up to homologous relations). 
Note that the Borel-Moore cycles $[\Sigma_{z_j}], j\neq 1$, give exactly the dual bases in $H^{2n}(J_{G}, \bQ)$. The proof is complete. 
\end{proof}

Recall that over $\bQ$, the irreducible representations of $\bZ/p\bZ$, for a prime $p$, are just the trivial representation and $\bQ[\bZ/p\bZ]_0$ (the space of $\bQ$-valued functions on $\bZ/p\bZ$ whose values sum up to $0$). As an immediate corollary, we get 
\begin{cor}\label{cor: SLp}
For any prime $p$, we have 
\begin{align*}
H_{-*}(J_{SL_p(\bC)},\bQ)\cong \big(\bQ[\bZ/p\bZ]_0\big)[2(p-1)]\oplus \bQ.
\end{align*}
as representations of $\bZ/p\bZ$. 
\end{cor}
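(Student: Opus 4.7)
The plan is to apply Corollary~\ref{cor: pi0ZLStrivial,H_*} to $G = SL_p$ and then upgrade the underlying vector-space identification to one of $\bZ/p\bZ = \cZ(SL_p)$-representations.

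First, I would verify the hypothesis of Corollary~\ref{cor: pi0ZLStrivial,H_*}. A proper standard Levi $L_S \subset SL_p$ with $S \subsetneq \Pi$ is of the form $S(GL_{k_1} \times \cdots \times GL_{k_r})$ for a composition $(k_1, \ldots, k_r)$ of $p$ with $r \geq 2$, and its center is the kernel of the homomorphism $(\bC^\times)^r \to \bC^\times$, $(a_1, \ldots, a_r) \mapsto \prod_i a_i^{k_i}$, whose component group is cyclic of order $\gcd(k_1, \ldots, k_r)$. Since each $k_i < p$ and $p$ is prime, this gcd equals $1$, so $\cZ(L_S)$ is connected. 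With $n = \rank SL_p = p - 1$ and $|\cZ(SL_p)| = p$, Corollary~\ref{cor: pi0ZLStrivial,H_*} then gives the underlying graded vector space
\[
H_{-*}(J_{SL_p}, \bQ) \cong \bQ^{p-1}[2(p-1)] \oplus \bQ.
\]

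To upgrade this to an identification of $\bZ/p\bZ$-representations, I would track the action through the proof of Corollary~\ref{cor: pi0ZLStrivial,H_*}. The center $\cZ(SL_p)$ acts on $J_{SL_p}$ via $z \cdot (g, \xi) = (zg, \xi)$, permuting the central-fiber components by $z \cdot \chi^{-1}([0])_{z'} = \chi^{-1}([0])_{zz'}$. By excision, the relative group $H_{2(p-1)}(J_{SL_p}, \cC_G; \bQ)$ is canonically the regular representation $\bQ[\bZ/p\bZ]$, with basis the relative classes of the $p$ disjoint handles $\chi^{-1}([0])^c_z$. The connecting homomorphism to $H_{2(p-1) - 1}(\cC_G, \bQ) \cong \bQ$ sends each basis vector to the same class, by the intersection computation already carried out in the proof of Corollary~\ref{cor: pi0ZLStrivial,H_*}, and is therefore the augmentation map $\bQ[\bZ/p\bZ] \to \bQ$. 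Since $H_{2(p-1)}(\cC_G, \bQ) = 0$, the long exact sequence identifies $H_{2(p-1)}(J_{SL_p}, \bQ)$ with the kernel of augmentation, namely $\bQ[\bZ/p\bZ]_0$, as $\bZ/p\bZ$-representations; the degree-zero piece is the trivial representation since $J_{SL_p}$ is connected.

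The only mild subtlety is confirming that the connecting homomorphism is genuinely the augmentation (and not a twist), which uses that $\bZ/p\bZ$ acts trivially on the one-dimensional space $H_{2(p-1)-1}(\cC_G, \bQ)$. One can alternatively sidestep this step via pure representation theory: because the cyclotomic polynomial $\Phi_p$ is irreducible over $\bQ$, the only nontrivial irreducible $\bQ$-representation of $\bZ/p\bZ$ is the $(p-1)$-dimensional augmentation $\bQ[\bZ/p\bZ]_0$, so any $\bZ/p\bZ$-action on a $(p-1)$-dimensional $\bQ$-vector space is either trivial or of this form; the trivial possibility is ruled out by the transitive permutation action on the handle classes.
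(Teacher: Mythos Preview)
Your proposal is correct and follows the same route as the paper. In fact, the paper offers no proof at all beyond the sentence ``As an immediate corollary, we get,'' relying on Corollary~\ref{cor: pi0ZLStrivial,H_*} together with the stated classification of irreducible $\bQ$-representations of $\bZ/p\bZ$; your verification of the connectedness hypothesis for the Levi centers and your two arguments (the long exact sequence of the pair and the representation-theory shortcut) simply make explicit what the paper leaves to the reader. One small remark: your shortcut argument ``the trivial possibility is ruled out by the transitive permutation action on the handle classes'' is a little thin for $p=2$, where both candidate representations are one-dimensional; the cleanest way to see nontriviality in all cases is to note, as in the proof of Corollary~\ref{cor: pi0ZLStrivial,H_*}, that the explicit basis $\mathsf{C}_j$ maps to $e_{z_j}-e_{z_1}$ in the regular representation, which visibly spans $\bQ[\bZ/p\bZ]_0$.
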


\begin{remark}
When the triviality condition of $\pi_0(\cZ(L_S)), S\subsetneq \Pi$ in Corollary \ref{cor: pi0ZLStrivial,H_*} fails, we do \emph{not} necessarily have $H_{2n}(J_G, \bQ)\cong \bQ^{(|\cZ(G)|-1)}$ . 
\end{remark}

\end{document}